\documentclass[12pt]{amsart}
\usepackage{amsmath, amssymb, amsthm}
\usepackage{paralist, xcolor, tikz, hyperref}
\usepackage{young}
\usepackage[margin=1in,letterpaper,portrait]{geometry}
\usepackage{enumitem}
\usepackage{caption}
\usepackage[noadjust]{cite}
\usepackage{comment}
\usepackage{tabularx}
\usepackage{tikz,tikz-cd} 
\usepackage[normalem]{ulem}

\usetikzlibrary { decorations.pathmorphing, decorations.pathreplacing, decorations.shapes, shapes.geometric, calc}

\tikzstyle{vertex}=[
minimum size=0.15cm,
inner sep=0pt,
outer sep=0pt,
circle,
draw=black,
]
\tikzstyle{filled}=[
vertex,
fill=black
]
\tikzstyle{unfilled}=[
vertex,
fill=white
]
\tikzstyle{blank}=[
vertex,
fill=white,
draw=white
]

\tikzstyle{hourglass}=[
out=#1*20,
in={#1*20+180},
relative,
looseness=1.5
]

\usepackage{ytableau}
\usepackage[vcentermath,enableskew]{youngtab}

\newcommand{\permwebmap}{\Phi}
\newcommand{\permtabmap}{\pi}
\newcommand{\webtabmap}{\omega}

\renewcommand{\path}{\textsf{path}}

\newtheorem{conjecture}{Conjecture}[section]
\newtheorem{theorem}[conjecture]{Theorem}
\newtheorem{corollary}[conjecture]{Corollary}
\newtheorem{lemma}[conjecture]{Lemma}

\theoremstyle{definition}

\newtheorem{example}[conjecture]{Example}

\newtheorem*{question*}{Question}

\title{Forest webs and pattern avoidance}

\author[Jessica Striker]{Jessica Striker$^*$}
\address{Department of Mathematics, North Dakota State University, Fargo, ND, USA}
\email{jessica.striker@ndsu.edu}
\thanks{$^*$Research partially supported by NSF Grant DMS-2247089 and Simons Foundation Gift MP-TSM-00002802.}

\author[Bridget Eileen Tenner]{Bridget Eileen Tenner$^\dagger$}
\address{Department of Mathematical Sciences, DePaul University, Chicago, IL, USA}
\email{bridget@math.depaul.edu}
\thanks{$^\dagger$Research partially supported by NSF Grant DMS-2054436 and by a University Research Council Competitive Research Leave from DePaul University.}

\date{}

\begin{document}

\begin{abstract}
In a recent preprint, Mike Cummings showed that the smooth components of suitably parametrized Springer fibers are in bijection with contracted, fully reduced Pl\"ucker degree-two $\mathfrak{sl}_r$-webs of standard type and  that are forests. He showed these are enumerated by sequence A116731 in the OEIS, which is equinumerous with permutations avoiding the patterns $\{321,2143,3124\}$. Cummings posed the problem of strengthening this enumerative result by finding a bijection between these webs and a collection of pattern-avoiding permutations. Here we solve this problem, although notably not with the collection of patterns that Cummings had proposed. Rather, we give a bijection between this class of webs and permutations avoiding the patterns
$\{132,4321,3214\}$.
\end{abstract}

\maketitle

\section{Introduction}
At the \textit{Enumerative Combinatorics} workshop at the Mathematisches Forschungsinstitut Oberwolfach in January 2026, the first author gave a talk making an (admittedly controversial) claim that the best objects in enumerative combinatorics are \emph{webs}. She argued that the following important combinatorial objects are all webs in disguise: tableaux~\cite{GPPSS26,PatriasGOT,PPR2009}, noncrossing partitions~\cite{flamingo,jellyfish}, Catalan objects~\cite{GPPSS25}, alternating sign matrices~\cite{GPPSS26}, and plane partitions~\cite{GPPSS26}. The second author pointed out a serious omission in the list of important combinatorial objects: permutations. While permutation matrices are a subset of alternating sign matrices and may thus be interpreted as webs, this is not a very satisfying patch for the omission. In this note, we give a more substantial connection between webs and permutations by producing a bijection between webs whose associated Springer fibers have nice geometric properties, and a class of pattern-avoiding permutations. In addition to addressing the second author's objection, this answers a recent question of Mike Cummings~\cite{MC}.

Cummings was studying the geometry of \emph{Springer fibers}, which are fibers of resolutions of certain varieties indexed by partitions. (See \cite[Sec. 2.2-2.3]{MC} for background on Springer fibers.) It was known  that $\mathfrak{sl}_2$-webs (better known as noncrossing matchings) govern the geometry and topology of the Springer fibers of two-row partitions~\cite{Fung03}.  Cummings investigated the relation between Springer fibers  of two-column partitions and the \emph{Pl\"ucker degree-two $\mathfrak{sl}_r$-webs} studied in work of the first author with Gaetz, Pechenik, Pfannerer, and Swanson~\cite{GPPSS25}, building on work of Fraser~\cite{Fraser23}. These papers give a bijection between move equivalence classes of these webs and two-column rectangular standard Young tableaux. These tableaux also index components of the Springer fiber, and Fresse and Melnikov gave a tableau criterion that determines when the associated component of the Springer fiber has the appealing geometric property of being \emph{smooth}~\cite{FM}. One of Cummings's main theorems \cite[Theorem 3.2]{MC}  nicely reinterprets this tableau condition in terms of webs, showing that the Springer fiber component is smooth if and only if the corresponding web is a \emph{forest} (i.e., a graph with no cycles). He also gave an explicit enumeration of the smooth components of the Springer fiber using these forest webs~\cite[Corollary 3.4]{MC}, finding that his formula matched the enumeration of permutations avoiding the patterns $\{321,2143,3124\}$ given in \cite[A116731]{oeis}.  Cummings then asked for a bijective proof of this equinumerosity. Our main result gives such a bijection, but instead of using Cummings's proposed pattern avoidance class, we use the equinumerous class of permutations avoiding the patterns $\{132,4321,3214\}$.

In Section~\ref{sec:webs to tableaux}, we review the map of \cite{Fraser23,GPPSS25} 
from degree-two webs to two-column standard Young tableaux. These tableaux are Catalan objects, and one could give bijective maps between them and $p$-avoiding permutations for any $p\in S_3$. The main result of this work relies on an important combinatorial structure connected to the correspondence with $132$-avoiding permutations, in particular, and we give the desired bijection in Section~\ref{sec:tableaux to perms}. 
Cummings's work shows that the subset of such webs that are forests is characterized by having at most $3$ white vertices (see Lemma~\ref{lem:MC}). The  most basic degree-two  $\mathfrak{sl}_r$ webs with $4$ white vertices correspond to the permutations $4321$ and $3214$, and we show in Section~\ref{sec:webby forests} that those smallest cases, in fact, characterize the entire property of being a forest, thus proving our desired bijection. 
We conclude in Section~\ref{sec:enumeration} by using our result to recover the enumeration of \cite{MC}.

\section{From degree-two webs to two-column tableaux}\label{sec:webs to tableaux}
Webs are diagrams that represent polynomial invariants of certain spaces with algebraic structure, and they reduce complicated algebraic computations to diagrammatic manipulations. The webs of interest in this paper are \emph{$\mathfrak{sl}_r$}-webs, which represent polynomials such as the determinant that are invariant under the action of a matrix from the special linear group on the variables. See~\cite[Sec.~1 and 2.4]{MC}, for example, for more background on webs.

We refer to the particular webs that appear in this paper using the following, more specific, terminology of \cite{GPPSS25}.
We say an \emph{$r$-hourglass plabic graph of standard type and Pl\"ucker degree $d$}  is a planar properly bicolored graph $G$ embedded in a disk with:
\begin{itemize}
\item $dr$ boundary vertices all colored black and of degree one, 
\item  edges that each have a positive multiplicity, drawn (when greater than $1$) as a multiple edge with an hourglass twist 
\begin{tikzpicture}[baseline={([yshift=-.8ex]current bounding box.center)}]
    \draw (0,0) to[out=60,in=120] (.5,0) to[out=60,in=120] (1,0);
    \draw (0,0) to[out=20,in=160] (.5,0) to[out=20,in=160] (1,0);
    \draw (0,0) to[out=-20,in=-160] (.5,0) to[out=-20,in=-160] (1,0);
    \draw (0,0) to[out=-60,in=-120] (.5,0) to[out=-60,in=-120] (1,0);
    \fill (0,0) circle (2.5pt);
    \fill[white] (1,0) circle (2.5pt);
    \draw (1,0) circle (2.5pt);
\end{tikzpicture}, and
\item  the sum of the multiplicities of all the edges around any interior vertex equal to $r$. 
\end{itemize}
We consider $G$ up to
planar isotopy fixing the boundary circle.

The specific webs of interest are \emph{contracted, fully reduced $r$-hourglass plabic graphs of standard type and Pl\"ucker degree-two}, which we refer to as \emph{degree-two $\mathfrak{sl}_r$-webs} or \emph{degree-two webs} for short. We will not need the definitions of \emph{contracted} or \emph{fully reduced}, as the lemma below is sufficient for our purposes, and instead refer the interested reader to \cite[Sec.~2.2]{GPPSS25}.

\begin{lemma}[\protect{\cite[Lemmas 6.2--6.5]{GPPSS25}}]
\label{lem:CRG2}
    A degree-two $\mathfrak{sl}_r$-web has the following properties:
    \begin{itemize}
        \item each white interior vertex is adjacent to a boundary vertex,
        \item each black interior vertex has 3 incident hourglasses, and
        \item the number of interior white vertices exceeds the number of interior black vertices by exactly $2$.
    \end{itemize}
\end{lemma}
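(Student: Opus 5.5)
This lemma is cited from \cite[Lemmas 6.2--6.5]{GPPSS25}, so the plan is to rebuild it from the definitions of contracted and fully reduced hourglass plabic graphs. The main tools are Euler's formula for the planar graph in the disk and a degree (multiplicity) count. The key invariant is the Plücker degree count: there are $2r$ boundary vertices, and every interior vertex has total incident multiplicity $r$. Since the graph is properly bicolored and boundary vertices are black of degree one, each edge joins a white vertex to a black vertex.

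Let $w$ and $b$ be the numbers of interior white and interior black vertices. Count total edge multiplicity from the white side and from the black side. From the white side we get $rw$. From the black side we get $2r$ (one unit from each boundary vertex, whose unique edge goes to a white vertex and, by the multiplicity convention, has multiplicity one) plus $rb$. Equating gives $rw = 2r + rb$, so $w - b = 2$. This proves the third bullet.

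For the first bullet, suppose a white interior vertex $v$ has no boundary neighbor. I would use the fully reduced hypothesis, specifically the absence of certain faces and the standard-type condition, to show this is impossible. The argument goes by considering the face structure. Contraction means no two adjacent interior vertices of the same color and no degree-two (one-hourglass) interior vertices, since these could be contracted. So every interior black vertex has degree at least $2$ in hourglasses. A white vertex without a boundary neighbor would then create an interior face whose boundary alternates through interior vertices only. Using Euler's formula together with the degree bound forced by $w - b = 2$, I would show such an interior face forces a 4-cycle or benzene-type move configuration, which is excluded by fully reducedness.

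For the second bullet, I would combine Euler characteristic with the bound $\deg \ge 3$ for black interior vertices (degree two is excluded by contraction). Counting hourglass edges as $E = 2r$-boundary edges plus the interior edges, and faces via $V - E + F = 1$, the inequality $\sum_{\text{black}} (\deg - 3) \ge 0$ must be tight. Tightness would follow from the first bullet: each white vertex spends at least one hourglass on the boundary. I expect this to be the main obstacle, since it requires the precise definition of fully reduced and a careful face-counting argument rather than a simple identity. Failing a clean count, I would simply invoke \cite[Lemmas 6.2--6.5]{GPPSS25} directly, as the paper does.
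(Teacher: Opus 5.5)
The paper gives no proof of this lemma; it quotes it from \cite[Lemmas 6.2--6.5]{GPPSS25}, so your fallback is exactly what the paper does. Taken on its own, your count for the third bullet is correct and complete. Every edge joins an interior white vertex to a black one, so $rw=2r+rb$, and this uses only the bicoloring, $r$-valence and standard type.

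The genuine gap is the first bullet. Your mechanism is that a white vertex with no boundary neighbor forces a square or benzene configuration, which full reducedness excludes. This cannot work, because full reducedness does not exclude such faces: the non-forest webs of Example~\ref{ex:webs with 4 white vertices} are degree-two webs containing a white--black--white--black $4$-cycle. Nor can Euler's formula plus valence counting replace it. Here is an example for $r=6$:
\begin{itemize}
\item join a white $u_0$ to blacks $v_1,v_2,v_3$;
\item join $v_1$ also to whites $x,s$, join $v_2$ to $x,y$, and join $v_3$ to $y,p$;
\item join a fourth black $t$ to $p,s,q$;
\item give all these edges multiplicity $2$, and attach consecutive blocks of $2,2,2,4,2$ boundary vertices to $x,y,p,q,s$ in that cyclic order.
\end{itemize}
This is a planar, properly bicolored, $6$-valent graph of standard type with $12$ boundary vertices and no bigons. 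Every interior vertex has at least three incident edges, and every interior black vertex is trivalent. The interior faces are $u_0v_1xv_2$, $u_0v_2yv_3$ and $u_0v_3ptsv_1$, and $w-b=6-4=2$. Yet $u_0$ has no boundary neighbor. So the first bullet rests on the precise definition of full reducedness in \cite{GPPSS25}, which your sketch never engages with.

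Your second-bullet argument explicitly depends on the first, so the gap carries over. Granting the first bullet, though, the count you gesture at can be completed, and what it needs is planarity rather than a multiplicity count. Contract the boundary circle to a single vertex $z$. Every white vertex has a boundary neighbor; assume also that no two edges join the same pair of vertices. Then you get a simple bipartite plane graph on $w+b+1$ vertices with $E_{\mathrm{int}}+w$ edges, where $E_{\mathrm{int}}$ counts interior edges. Hence $E_{\mathrm{int}}+w\le 2(w+b+1)-4$, that is, $E_{\mathrm{int}}\le w+2b-2=3b$. Conversely, each black interior vertex has at least three incident edges. Two is excluded by contractedness, as you say, and one would leave its white neighbor with no boundary neighbor. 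So $E_{\mathrm{int}}\ge 3b$, and equality forces every black interior vertex to be trivalent.
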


Cummings gave the following characterization of degree-two webs that are forests.  (He stated this in terms of \emph{claws}; the statement below is trivially equivalent.) 
\begin{lemma}[\protect{\cite[Lemma 3.1]{MC}}]
\label{lem:MC}
    A degree-two $\mathfrak{sl}_r$-web is a forest if and only if it has at most $3$ white vertices.
\end{lemma}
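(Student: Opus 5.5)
We prove Lemma~\ref{lem:MC} by counting vertices and edges, using only the structural facts in Lemma~\ref{lem:CRG2}. Throughout, we treat each hourglass as a single edge of the underlying simple graph $G$, since multiplicity does not affect whether $G$ has cycles. Let $w$ be the number of interior white vertices. By Lemma~\ref{lem:CRG2}, there are $w-2$ interior black vertices. There are $2r$ boundary vertices, each of degree one.

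First we count edges. The graph is properly bicolored and the boundary vertices are black, so every edge has exactly one white endpoint. Each black interior vertex has exactly $3$ incident hourglasses, which gives $3(w-2)$ edges. Each boundary vertex contributes $1$ edge, which gives $2r$ edges. Therefore
\[
|E| = 3(w-2) + 2r, \qquad |V| = w + (w-2) + 2r = 2w - 2 + 2r .
\]
Let $c$ be the number of connected components of $G$, and let $\beta = |E| - |V| + c$ be its cycle rank. Then
\[
\beta = w - 4 + c .
\]
So $G$ is a forest exactly when $\beta = 0$, that is, when $c = 4 - w$.

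Next we bound the number of components. Every component contains a white vertex, because every edge has a white endpoint and every vertex has degree at least one. This holds for boundary vertices, for black interior vertices, and for white interior vertices, each of which is adjacent to a boundary vertex. Hence $c \le w$.

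We also need the reverse inequality $\beta \ge 0$, i.e.\ $c \ge 4 - w$, which is automatic. Combining the two facts:
\begin{itemize}
\item \textbf{If $w \ge 4$:} forestness would force $c = 4 - w \le 0$, which is impossible. So $G$ is not a forest.
\item \textbf{If $w \le 3$:} forestness is equivalent to $c = 4 - w$.
\end{itemize}
Thus the "only if" direction is immediate. For the "if" direction we must show $c \ge 4 - w$ is attained with equality when $w \le 3$. The cases are $w = 2, 3$, since $w \ge 2$ because there are $w-2 \ge 0$ black interior vertices.

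\textbf{Case $w = 2$.} There are no black interior vertices. Each edge then joins a white vertex to a boundary vertex, so $G$ is a union of stars. This gives $c = 2 = 4 - w$.

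\textbf{Case $w = 3$.} There is one black vertex, with exactly three neighbors, all white. So these three whites lie in one component. The remaining edges are pendant boundary edges, which create no cycles. Provided the three hourglasses of the black vertex go to three distinct white vertices, we get $c = 1 = 4 - w$. This is the one step needing care: we must rule out two hourglasses joining the same pair of vertices. That would amount to a doubled edge, which should be excluded because an hourglass already encodes multiplicity. We will justify this from the contracted/fully reduced hypothesis as packaged in Lemma~\ref{lem:CRG2}, which presumably counts distinct neighboring hourglasses.

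So the main obstacle is exactly this well-definedness of "$3$ incident hourglasses" as $3$ distinct neighbors. The rest is the Euler characteristic count $\beta = w - 4 + c$.
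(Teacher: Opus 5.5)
The paper gives no proof of this lemma; it imports it from Cummings, who phrases it via claws. Your cycle-rank argument from Lemma~\ref{lem:CRG2} is therefore a self-contained alternative, and most of it is sound. The identity $|E|-|V|+c=w-4+c$ is computed correctly, it yields $w\ge 4\Rightarrow$ not a forest at once since $c\ge 1$, and the case $w=2$ is fine.

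The gap is the step you flagged and then deferred: that for $w=3$ the three hourglasses at the unique black vertex end at three \emph{distinct} white vertices. This cannot be extracted from Lemma~\ref{lem:CRG2}, and it is not a technicality. Join $B$ to $W_1$ by two hourglasses of multiplicities $a,a'$ and to $W_2$ by one of multiplicity $b$, with $a+a'+b=r$. Attach $b$, $a+a'$, and $r$ boundary vertices to $W_1$, $W_2$, $W_3$ respectively. This configuration is planar, properly bicolored, $r$-valent with $2r$ boundary vertices, and satisfies all three bullets of Lemma~\ref{lem:CRG2}. Yet it has a $2$-cycle, with $c=2$ and $\beta=1$. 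So the ``if'' direction fails unless something beyond those bullets is used.

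What closes the gap is the definition of the objects. $G$ is a planar \emph{graph} whose edges carry multiplicities, and an hourglass is exactly how a multiple connection between two vertices is recorded. Hence no two hourglasses share both endpoints.

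If you prefer to allow parallel hourglasses a priori, you can argue instead from planarity for $w=3$. Two hourglasses between $B$ and $W_1$ would bound a region containing no vertex: any other white vertex inside would be cut off from its boundary neighbor, and $B$ is the only interior black vertex. So an innermost such pair bounds an empty digon, which merges into a single hourglass of multiplicity $a+a'$. This is a reducible configuration of the kind the contracted, fully reduced hypothesis should exclude.

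You should state this fact once at the outset. It is also what licenses identifying hourglasses with edges of the ``underlying simple graph'' in your count $|E|=3(w-2)+2r$. If parallel hourglasses were allowed and forestness referred to the simple graph, take $B_1$ doubly joined to $W_1$ and singly to $W_2$, and $B_2$ likewise to $W_3$ and $W_4$. This gives a forest with $w=4$.

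With this fact in hand, your $w=3$ case goes through as written ($c=1$, $\beta=0$), and the proof is complete.
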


Note that by Lemma~\ref{lem:CRG2}, since the number of white vertices minus the number of black interior vertices is $2$, a degree-two forest web has either $3$ white vertices and $1$ interior black vertex or $2$ white vertices and no interior black vertices. See Figure~\ref{fig:forest_webs} for examples of forest webs and Example~\ref{ex:webs with 4 white vertices} for examples of non-forest webs.

We now review the map $\webtabmap$  from a degree-two $\mathfrak{sl}_r$-web $G$ to an $r\times 2$ standard Young tableau $T=\webtabmap(G)$, following \cite{Fraser23,GPPSS25}. (An \emph{$a\times b$ standard Young tableau} is a bijective filling of the partition shape $a^b$ with the numbers $\{1,\ldots,ab\}$ that is increasing across rows and down columns.) For clarity, we restrict our discussion to the case in which $G$ is a forest.
\begin{enumerate}
    \item Replace each hourglass of $G$ by a single weighted edge, with weight equal to the multiplicity.
    \item  Draw a polygon with the white vertices as the corners.
    \begin{itemize}
        \item 
    If there are three white vertices, the polygon is a triangle with the unique interior black vertex and adjacent hourglasses inside. Weight each edge of the triangle by the multiplicity of the hourglass that is not incident to the edge. Then delete the interior black vertex and incident hourglasses.
    \item If there are two white vertices, connect them by an edge of weight $r$, and consider this as the (degenerate) polygon. 
        \end{itemize}
    \item Construct a noncrossing matching by first turning all polygon edges 
    with weight $m$ into $m$ multiple (non-twisted) edges. Then delete the polygon vertices and connect each boundary edge to the corresponding edge on the other side of the white vertex to make the edges noncrossing.
    \item Finally, use the standard Catalan bijection to construct an $r\times 2$ standard Young tableau from this noncrossing matching. That is, for all connected pairs $i<j$, put $i$ in column $1$ and $j$ in column $2$ (and order the entries in each column to be increasing). Since it will be used later, we denote the noncrossing matching corresponding to a $r\times 2$ tableau $T$ as $\mu(T)$. 
\end{enumerate}
Figure~\ref{fig:forest_webs} gives two examples of this construction. Additional examples, and further details of the construction, can be found in~\cite[Sec.~2.5]{MC} or \cite[Sec.~2.4]{GPPSS25}.

\begin{figure}[htbp]
    \centering
$$\begin{minipage}{2in}\begin{tikzpicture}
    \node[draw=none,
        minimum size=3.6cm,
        regular polygon,
        regular polygon sides=8,
        regular polygon rotate=45
        ] (a) {};
    \node[draw=none,
        minimum size=4.1cm,
        regular polygon,
        regular polygon sides=8,
        regular polygon rotate=45
        ] (b) {};
    \foreach \x [evaluate=\x as \y using int(9-\x)] in {1,2,...,8}{
        \fill (a.corner \x) circle[radius=2.5pt];
        \draw (b.corner \x) node {$\y$};
        }
    \draw (0,0) circle (1.8cm);
    \fill (0,0) circle (2.5pt) coordinate (center);
    \draw ($(a.corner 8)!0.5!(a.corner 2)$) coordinate (17avg);
    \draw ($(17avg)!0.3!(center)$) coordinate (upper node);
    \draw (a.corner 2) -- (upper node) -- (a.corner 8);
    \draw (a.corner 1) -- (upper node) -- (center);
    \draw ($(a.corner 3)!0.5!(a.corner 5)$) coordinate (46avg);
    \draw ($(46avg)!0.3!(center)$) coordinate (lower node);
    \draw (a.corner 3) -- (lower node) -- (a.corner 5);
    \draw (a.corner 4) -- (lower node) -- (center);
    \draw ($(a.corner 6)!0.5!(a.corner 7)$) coordinate (23avg);
    \draw ($(23avg)!0.3!(center)$) coordinate (right inside);
    \draw (a.corner 6) -- (right inside) -- (a.corner 7);
    \draw ($(right inside)!0.5!(center)$) coordinate (right mid);
    \draw (center) to[out=30,in=150] (right mid) to[out=30,in=150] (right inside);
    \draw (center) to[out=-30,in=-150] (right mid) to[out=-30,in=-150] (right inside);
    \foreach \x in {upper node, lower node, right inside}{
        \fill[white] (\x) circle (2.5pt);
        \draw (\x) circle (2.5pt);
        }
\end{tikzpicture}\end{minipage}
\hspace{.15in}
    \begin{minipage}{.55in}\begin{ytableau}
        1 & 2\\
        3 & 4\\
        5 & 7\\
        6 & 8
    \end{ytableau}
    \end{minipage}
\hspace{.7in}
\begin{minipage}{2in}\begin{tikzpicture}
    \node[draw=none,
        minimum size=3.6cm,
        regular polygon,
        regular polygon sides=8,
        regular polygon rotate=45
        ] (a) {};
    \node[draw=none,
        minimum size=4.1cm,
        regular polygon,
        regular polygon sides=8,
        regular polygon rotate=45
        ] (b) {};
    \foreach \x [evaluate=\x as \y using int(9-\x)] in {1,2,...,8}{
        \fill (a.corner \x) circle[radius=2.5pt];
        \draw (b.corner \x) node {$\y$};
        }
    \draw (0,0) circle (1.8cm);
    \draw ($(a.corner 8)!0.5!(a.corner 3)$) coordinate (16avg);
    \draw ($(a.corner 7)!0.5!(a.corner 4)$) coordinate (25avg);
    \draw (a.corner 8) -- (16avg) -- (a.corner 3);
    \draw (a.corner 1) -- (16avg) -- (a.corner 2);
    \draw (a.corner 7) -- (25avg) -- (a.corner 4);
    \draw (a.corner 6) -- (25avg) -- (a.corner 5);
    \foreach \x in {16avg, 25avg}{
        \fill[white] (\x) circle (2.5pt);
        \draw (\x) circle (2.5pt);
        }
\end{tikzpicture}\end{minipage}
\hspace{.15in}
    \begin{minipage}{.55in}\begin{ytableau}
        1 & 2\\
        3 & 6\\
        4 & 7\\
        5 & 8
    \end{ytableau}
    \end{minipage}$$    
    \caption{Examples of the two types of forest webs and their corresponding tableaux. }
    \label{fig:forest_webs}
\end{figure}
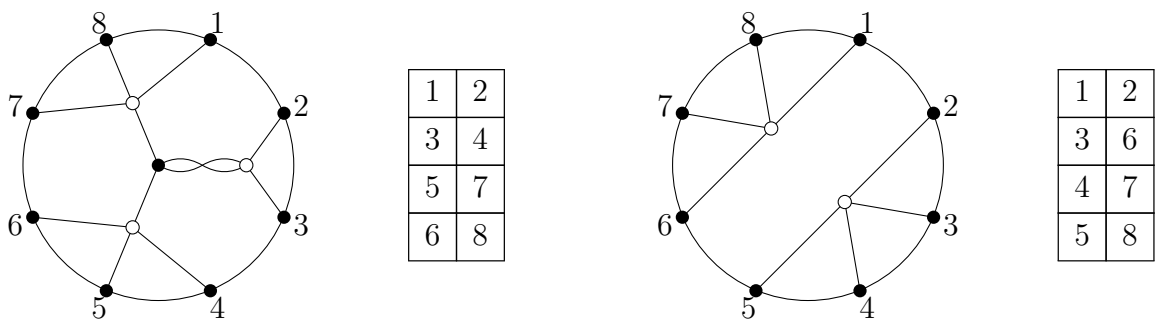

We show in the next section that the two-column tableaux produced by this map are in bijection with the set of $132$-avoiding permutations.

\section{From two-column tableaux to $132$-avoiding permutations}\label{sec:tableaux to perms}

Given permutations $w$ and $p$ written in one-line notation, we say that $w$ \emph{avoids the pattern} $p$ if there is no subsequence of $w$ that is in the same relative order as $p$. A permutation avoids a set of patterns if it avoids every pattern in that set. 
Rectangular standard Young tableaux with $2$ columns are a Catalan object, and are thus in bijection with $p$-avoiding permutations for any $p \in S_3$ \cite{catalan}. There are numerous bijective maps between these tableaux and such permutations. In this note, we identify a particular bijection with $132$-avoiding permutations, which will preserve the combinatorial structure we are most interested in. 
     
We say a \emph{Dyck path} from $(1,0)$ to $(r+1,r)$ is a path consisting of north and east steps that never passes below the line $y = x-1$. This variant is shifted one unit to the right from the more customary definition, so that the permutation we are defining can appear as a subset of $[1,r]^2$, rather than as a subset of $[0,r-1] \times [1,r]$.

Define a map $\permtabmap : \{r\times 2 \text{ standard Young tableaux}\} \rightarrow \{132\text{-avoiding permutations in } S_r\}$ by first sending a tableau $T$ to a  Dyck path $\path(T)$, where the $i$th step is north if and only if $i$ is in the first column of $T$, and then using $\path(T)$ to define a $132$-avoiding permutation as follows, and demonstrated in Example~\ref{ex:example in S_11} and Figure~\ref{fig:example in S_11}.  We follow the construction of \cite{AR2003}, which is equivalent to the map of \cite{CK2001}.

\begin{figure}[bp]
    T = \begin{minipage}{.5in}\begin{ytableau}
        1 & 3\\
        2 & 7\\
        4 & 8\\
        5 & 9\\
        6  & 11\\
        10 & 12\\
        13 & 15\\
        14 & 18\\
        16 & 19\\
        17 & 20\\
        21 & 22
    \end{ytableau}\end{minipage} 
    \hspace{.3in}
    \begin{minipage}{2.25in}\begin{tikzpicture}[scale=.5]
        \begin{scope}
        \clip (.75,0) rectangle (12,11.25);
        \foreach \x in {(1,2),(2,5),(5,6),(7,8),(8,10),(11,11)} {
            \fill[black!20] \x++(-.25,-.25) rectangle ++(11.25,.5);
            \fill[black!20] \x++(-.25,-.25) rectangle ++(.5,-11.25);
            \draw[thick] \x circle (5pt);
            }
        \end{scope}
        \draw (5,7.5) node {$\path(T)$};
        \draw[thick,dotted] (1,0) -- (12,11);
        \draw[black!60] (1,0) grid (12,11);
        \draw[ultra thick] (1,0) -- (1,2) -- (2,2) -- (2,5) -- (5,5) -- (5,6) -- (7,6) -- (7,8) -- (8,8) -- (8,10) -- (11,10) -- (11,11) -- (12,11);
        \foreach \x in {(1,2),(2,5),(5,6),(7,8),(8,10),(11,11)} {
            \draw[thick] \x circle (5pt);
        }
        \draw (7,-.5) node[below] {$y = x-1$};
        \draw[->] (7,-.5) -- (4.25,2.75);
    \end{tikzpicture}\end{minipage}
    \hspace{.3in}
    \begin{minipage}{2.25in}\begin{tikzpicture}[scale=.5]
        \begin{scope}
        \clip (.75,0) rectangle (12,11.25);
        \foreach \x in {(1,2),(2,5),(5,6),(7,8),(8,10),(11,11),
        (3,4), (9,9),
        (4,3), (10,7),
        (6,1)
        } {
            \fill[black!20] \x++(-.25,-.25) rectangle ++(11.25,.5);
            \fill[black!20] \x++(-.25,-.25) rectangle ++(.5,-11.25);
            \draw[thick] \x circle (5pt);
            }
        \end{scope}
        \draw[black!60] (1,0) grid (12,11);
        \draw[ultra thick] (1,0) -- (1,2) -- (2,2) -- (2,5) -- (5,5) -- (5,6) -- (7,6) -- (7,8) -- (8,8) -- (8,10) -- (11,10) -- (11,11) -- (12,11);
        \foreach \x in {(2,5),(5,6),(7,8),(8,10),(11,11),
        (3,4), (9,9),
        (4,3), (10,7),
        (6,1)
        } {
            \draw[thick] \x circle (5pt);
        }
        \draw[white] (7,-.5) node[below] {$y = x-1$};
    \end{tikzpicture}\end{minipage}
    \caption{A two-column tableau, its corresponding Dyck path, and the resulting permutation that is constructed by $\permtabmap$.}\label{fig:example in S_11}
\end{figure}
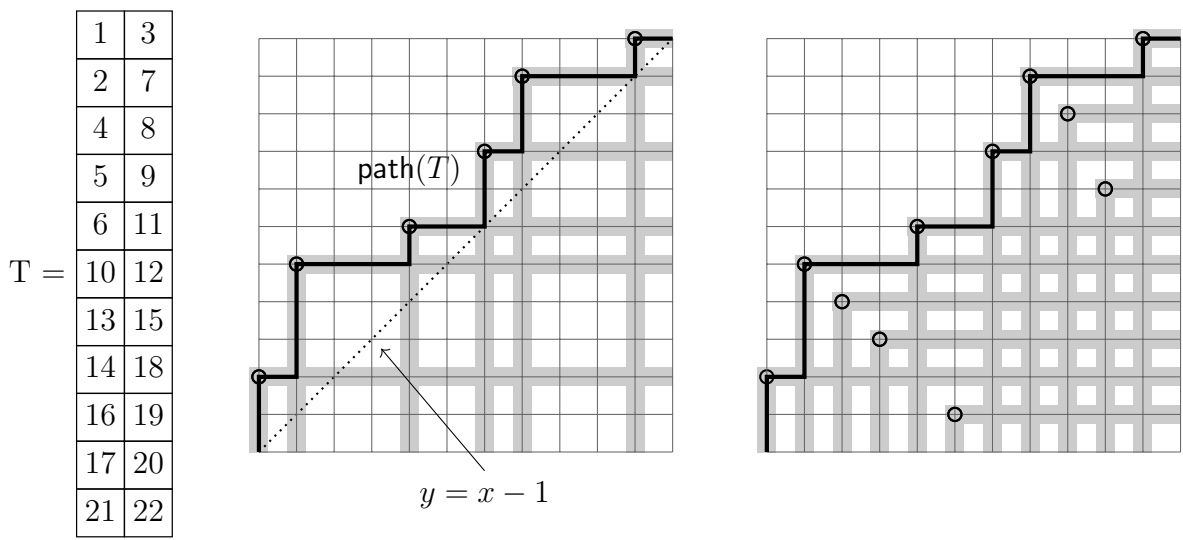

\begin{enumerate}
    \item Put a marker at each northwest corner of $\path(T)$.
    \item Draw a ray eastward and a ray southward from each marker.
    \item Put a marker at any northwest-most point of $[1,r]^2$ that lies under $\path(T)$ and that has not yet been marked by a marker or a ray; extend rays eastward and southward from this newly marked point. Repeat this process iteratively with the remaining unmarked points of $[1,r]^2$ that lie under $\path(T)$.
    \item This process terminates after $r$ markers are placed, and those $r$ points determine a permutation $w$. Read the permutation from the top downward (``matrix coordinate''-style): a marker at $(x,y)$ means that $w(r+1-y) = x$.  
    \item Let $\permtabmap(T)$ be this permutation $w$. That $\permtabmap$ is a bijection follows from \cite{CK2001}, and this construction ensures that $w$ is $132$-avoiding.
\end{enumerate}

\begin{example}\label{ex:example in S_11}
    The tableau $T$ in Figure~\ref{fig:example in S_11} produces the path $\path(T)$ indicated, with northwest corners circled and eastward and southward rays drawn from those circled points. The rightmost figure has all such points and rays indicated, from which we see that $\permtabmap(T) = 11 \ 8 \ 9 \ 7 \ 10 \ 5 \ 2 \ 3 \ 4 \ 1 \ 6$. This permutation is $132$-avoiding, as predicted.
\end{example}

\section{From forest webs to $\{132,4321,3214\}$-avoiding permutations}
\label{sec:webby forests}

Let $\permwebmap$ be the composition of maps $\permtabmap \circ \webtabmap$, so $\permwebmap$ is a map from degree-two webs to $132$-avoiding permutations. 
Our main result is that when we restrict the domain to be webs that are forests, this $\permwebmap$ is a bijection between that set and $\{132,4321,3214\}$-avoiding permutations.

As shown by Cummings (see Lemma~\ref{lem:MC} above), the web will be a forest if and only if there are fewer than four white vertices \cite{MC}. To get a sense of what this means for the map $\permwebmap$, consider the simplest examples of webs that fail to meet this criterion.

\begin{example}\label{ex:webs with 4 white vertices}
    The two $4\times 2$ tableaux whose webs have four white vertices are given below, along with their corresponding webs.
$$
\begin{minipage}{2in}\begin{tikzpicture}
    \node[draw=none,
        minimum size=3.6cm,
        regular polygon,
        regular polygon sides=8,
        regular polygon rotate=45
        ] (a) {};
    \node[draw=none,
        minimum size=4.1cm,
        regular polygon,
        regular polygon sides=8,
        regular polygon rotate=45
        ] (b) {};
    \node[draw=none,
        minimum size=1.8cm,
        regular polygon,
        regular polygon sides=4,
        regular polygon rotate=45
        ] (c) {};
    \foreach \x [evaluate=\x as \y using int(9-\x)] in {1,2,...,8}{
        \fill (a.corner \x) circle[radius=2.5pt];
        \draw (b.corner \x) node {$\y$};
        }
    \draw (0,0) circle (1.8cm);
    \draw (a.corner 1) -- (c.corner 1) -- (c.corner 4) -- (c.corner 3) -- (a.corner 4);
    \draw (a.corner 8) -- (c.corner 1) -- (c.corner 2) -- (c.corner 3) -- (a.corner 5);
    \draw ($(a.corner 2)!0.5!(a.corner 3)$) coordinate (67avg);
    \draw ($(67avg)!0.3!(c.corner 2)$) coordinate (left inside);
    \draw ($(a.corner 6)!0.5!(a.corner 7)$) coordinate (23avg);
    \draw ($(23avg)!0.3!(c.corner 4)$) coordinate (right inside);
    \draw (a.corner 2) -- (left inside) -- (a.corner 3);
    \draw (a.corner 6) -- (right inside) -- (a.corner 7);
    \draw ($(left inside)!0.5!(c.corner 2)$) coordinate (left mid);
    \draw ($(right inside)!0.5!(c.corner 4)$) coordinate (right mid);
    \draw (left inside) to[out=30,in=150] (left mid) to[out=30,in=150] (c.corner 2);
    \draw (left inside) to[out=-30,in=-150] (left mid) to[out=-30,in=-150] (c.corner 2);
    \draw (c.corner 4) to[out=30,in=150] (right mid) to[out=30,in=150] (right inside);
    \draw (c.corner 4) to[out=-30,in=-150] (right mid) to[out=-30,in=-150] (right inside);
    \foreach \x in {left inside, right inside}{
        \fill[white] (\x) circle (2.5pt);
        \draw (\x) circle (2.5pt);
        }
    \foreach \x in {1,3}{
        \fill[white] (c.corner \x) circle (2.5pt);
        \draw (c.corner \x) circle (2.5pt);}
    \foreach \x in {2,4}
        \fill (c.corner \x) circle (2.5pt);
\end{tikzpicture}\end{minipage}
\hspace{.2in}
    \begin{minipage}{.55in}\begin{ytableau}
        1 & 2\\
        3 & 4\\
        5 & 6\\
        7 & 8
    \end{ytableau}
    \end{minipage}
    \hspace{.6in}
\begin{minipage}{2in}\begin{tikzpicture}
    \node[draw=none,
        minimum size=3.6cm,
        regular polygon,
        regular polygon sides=8,
        regular polygon rotate=45
        ] (a) {};
    \node[draw=none,
        minimum size=4.1cm,
        regular polygon,
        regular polygon sides=8,
        regular polygon rotate=45
        ] (b) {};
    \node[draw=none,
        minimum size=1.8cm,
        regular polygon,
        regular polygon sides=4,
        regular polygon rotate=90
        ] (c) {};
    \foreach \x [evaluate=\x as \y using int(9-\x)] in {1,2,...,8}{
        \fill (a.corner \x) circle[radius=2.5pt];
        \draw (b.corner \x) node {$\y$};
        }
    \draw (0,0) circle (1.8cm);
    \draw (a.corner 1) -- (c.corner 1) -- (c.corner 4) -- (c.corner 3) -- (a.corner 5);
    \draw (a.corner 2) -- (c.corner 1) -- (c.corner 2) -- (c.corner 3) -- (a.corner 6);
    \draw ($(a.corner 4)!0.5!(a.corner 3)$) coordinate (56avg);
    \draw ($(56avg)!0.3!(c.corner 2)$) coordinate (left inside);
    \draw ($(a.corner 8)!0.5!(a.corner 7)$) coordinate (12avg);
    \draw ($(12avg)!0.3!(c.corner 4)$) coordinate (right inside);
    \draw (a.corner 4) -- (left inside) -- (a.corner 3);
    \draw (a.corner 8) -- (right inside) -- (a.corner 7);
    \draw ($(left inside)!0.5!(c.corner 2)$) coordinate (left mid);
    \draw ($(right inside)!0.5!(c.corner 4)$) coordinate (right mid);
    \draw (left inside) to[out=75,in=-165] (left mid) to[out=75,in=-165] (c.corner 2);
    \draw (left inside) to[out=15,in=-105] (left mid) to[out=15,in=-105] (c.corner 2);
    \draw (c.corner 4) to[out=75,in=-165] (right mid) to[out=75,in=165] (right inside);
    \draw (c.corner 4) to[out=15,in=-105] (right mid) to[out=15,in=-105] (right inside);
    \foreach \x in {left inside, right inside}{
        \fill[white] (\x) circle (2.5pt);
        \draw (\x) circle (2.5pt);
        }
    \foreach \x in {1,3}{
        \fill[white] (c.corner \x) circle (2.5pt);
        \draw (c.corner \x) circle (2.5pt);}
    \foreach \x in {2,4}
        \fill (c.corner \x) circle (2.5pt);
\end{tikzpicture}\end{minipage}
\hspace{.2in}
    \begin{minipage}{.55in}\begin{ytableau}
        1 & 3\\
        2 & 5\\
        4 & 7\\
        6 & 8
    \end{ytableau}
    \end{minipage}
$$
Under the map $\permtabmap$, these tableaux correspond to the permutations $4321$ and $3214$, respectively.
\end{example}

Example~\ref{ex:webs with 4 white vertices} foreshadows our main result. We preface that theorem with two helpful lemmas, with the first involving the shape of a  Dyck path.

\begin{lemma}\label{lem:n1 arc means weakly above y = x}
    A {noncrossing matching} with $2r$ vertices has an arc between $2r$ and $1$ if and only if all but the first and last step of its corresponding Dyck path (what we will call its ``interior'') lives weakly above the line $y = x$.
\end{lemma}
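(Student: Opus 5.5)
The plan is to make the correspondence between the noncrossing matching $\mu(T)$ and $\path(T)$ explicit, and then read the arc condition as a condition on the path's height. Number the steps $1,\dots,2r$. Step $i$ is north exactly when $i$ is the smaller endpoint of its arc, that is, when $i$ lies in the first column of $T$. After $k$ steps the path sits at $(1+e_k, n_k)$, where $n_k$ and $e_k$ count the north and east steps so far. Set
\[
h_k = n_k - e_k .
\]
The path lies weakly above $y=x-1$ exactly when $h_k\ge 0$ for all $k$. In the standard Catalan bijection, $h_k$ is the number of arcs that are \emph{open} after step $k$: arcs with left endpoint at most $k$ and right endpoint greater than $k$.

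Next, translate "weakly above the line $y=x$" into heights. The point after step $k$ lies weakly above $y=x$ iff $n_k \ge 1+e_k$, i.e.\ iff $h_k\ge 1$. The interior of the path consists of steps $2$ through $2r-1$. By convexity, a lattice step has both endpoints weakly above the line iff the whole segment is, so the interior lies weakly above $y=x$ iff $h_k\ge 1$ for every $k$ with $1\le k\le 2r-1$.

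It remains to show that $h_k\ge 1$ for all $1\le k\le 2r-1$ holds iff $\{1,2r\}$ is an arc.

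($\Rightarrow$) Suppose $\{1,2r\}$ is an arc. This arc is open at every intermediate time $1\le k\le 2r-1$, since $1\le k<2r$. Hence $h_k\ge 1$ for all such $k$.

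($\Leftarrow$) Suppose $1$ is matched to some $j<2r$. Then no arc can straddle time $j$. Any arc $\{a,b\}$ with $a\le j<b$ would have $a>1$ and $b>j$, so it would cross $\{1,j\}$, which is impossible in a noncrossing matching. Therefore $h_j=0$ with $1\le j\le 2r-1$, so the path touches $y=x-1$ at an interior point and the interior condition fails.

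The main care point is the off-by-one bookkeeping caused by the shifted start $(1,0)$. Once the height function is set up correctly, both directions are immediate.
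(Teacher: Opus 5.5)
Your proof is correct and is essentially the paper's argument. The paper observes that if $1$ is matched to $q<2r$, the arcs split into those on $[1,q]$ and those on $[q+1,2r]$, and it reads this off the path via the topmost $m\times 2$ subtableaux; this is exactly your statement that $h_q=0$ because no arc straddles $q$, while an arc $\{1,2r\}$ keeps $h_k\ge 1$ at every intermediate time. Your height function makes explicit the off-by-one translation between $y=x-1$, $y=x$, and the interior steps, which the paper leaves implicit.
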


\begin{proof}
    Let $T$ be an $r\times 2$ tableau with noncrossing matching $\mu(T)$. Let $q \in [2,2r]$ be connected to $1$ by an arc in $\mu(T)$, and separate the arcs of the matching into those involving vertices $[1,q]$ and those involving vertices $[q+1,2r]$. This fails to be $2$-part partition of the arcs if and only if $q = 2r$. This is equivalent to every topmost $m\times 2$ subtableau of $T$ containing at least one value larger than $2m$, for all $m < 2r$, which is equivalent to the interior of $\path(T)$ lying weakly above $y = x$.
\end{proof}

The second preliminary lemma refers to the construction of the permutation $\permtabmap(T)$.

\begin{lemma}\label{lem:points are southeast of each other}
    Let $T$ be a two-column tableau. Any two points of $\permtabmap(T)$ lying southeast of a given marker on $\path(T)$ must be such that one is southeast of the other.
\end{lemma}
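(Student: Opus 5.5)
The plan is to exploit the order in which the construction of $\permtabmap(T)$ places its markers. Fix a marker $M=(a,b)$ at a northwest corner of $\path(T)$. Since $M$ is a corner, the path goes east from $M$ and arrived at $M$ going north. Every point of $[1,r]^2$ lying under the path and strictly southeast of $M$ is therefore covered by the eastward or southward ray from $M$, or else lies in the region $R=\{(x,y): x>a,\ y<b\}$ below the path. The points of $\permtabmap(T)$ strictly southeast of $M$ are exactly the markers placed in $R$, and we must show these form an increasing chain: each is southeast of the others, with no two in northeast/southwest position.

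The key step is an inductive description of where markers land. I would argue by contradiction. Suppose two markers $P=(x_1,y_1)$ and $Q=(x_2,y_2)$ in $R$ satisfy $x_1<x_2$ and $y_1<y_2$, so $Q$ is northeast of $P$. Consider the rectangle spanned by $M$ and $P$. Every point $Z=(x,y)$ with $a<x\le x_1$ and $y_1\le y<b$ lies under the path, because the path is monotone and stays weakly northwest of $M$'s east-run. Meanwhile $Q$ has $y_2>y_1$ and $x_2>x_1$. The construction in step (3) places markers only at northwest-most unmarked points, so each marker is placed at a point that is not northwest-dominated by any still-unmarked point.

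Now compare the times at which $P$ and $Q$ are placed. If $Q$ is placed first, its southward ray covers column $x_2$ and its eastward ray covers row $y_2$; neither ray touches $P$. I then want to show that the point $(x_1,y_2)$, which lies under the path and is northwest of $Q$, must already be covered when $Q$ is placed. That point must be covered by a ray from some earlier marker $S$. If the covering ray is horizontal, $S$ lies in row $y_2$ to the west of $x_1$, and its eastward ray would also hit $Q$, which is impossible. If the ray is vertical, $S$ lies in column $x_1$ above $y_2$, and its southward ray would hit $P$, which is also impossible since $P$ becomes a marker later. So $(x_1,y_2)$ is unmarked and northwest of $Q$, contradicting the choice of $Q$ as northwest-most. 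The case where $P$ is placed first is symmetric: $(x_1,y_2)$ is northwest of $Q$ but northeast of $P$, so I would instead use a point such as $(x_1,y_2)$ relative to $Q$'s placement later, and run the same ray-covering argument.

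The main obstacle is pinning down precisely what ``northwest-most'' means in step (3), since a partial order admits several maximal elements. I would make this precise by saying a point is eligible if no unmarked, uncovered point under the path lies weakly northwest of it. I would also verify that the witness point $(x_1,y_2)$ really lies under $\path(T)$. This follows because it is northwest of $Q$ and southeast of $M$: being southeast of $M$ places it below the path's east-run from $M$, and the path is monotone. Once these two details are settled, the contradiction closes, and the lemma follows.
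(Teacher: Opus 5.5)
Your proof is correct in substance but follows a different route from the paper. The paper's proof is two lines. Suppose $P=(x_1,y_1)$ and $Q=(x_2,y_2)$ both lie southeast of the corner marker $M=(a,b)$, with $Q$ northeast of $P$. Reading rows from the top, $M$, $Q$, $P$ give the values $a$, $x_2$, $x_1$ with $a<x_1<x_2$. That is a $132$-pattern, which contradicts the fact, supplied by the construction and the literature it cites, that $\permtabmap(T)$ avoids $132$.

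You instead argue directly from the greedy placement rule, using the witness $Z=(x_1,y_2)$. In effect you reprove the relevant instance of $132$-avoidance locally. This is more self-contained than the paper's argument. The cost is that you must fix the meaning of ``northwest-most'' and split into cases on placement order. Your weak-order reading is the right one, since it is what makes the output a permutation.

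Your second case is misdescribed. $Z$ is not ``northeast of $P$'' in any useful sense: it lies directly north of $P$, hence weakly northwest of it. So the case where $P$ is placed first is handled by literally the same argument, run at the moment $P$ is placed:
\begin{itemize}
\item a covering horizontal ray through $Z$ would later hit $Q$;
\item a covering vertical ray through $Z$ would hit $P$.
\end{itemize}
Running the argument at $Q$'s placement time instead, as you suggest, needs an extra step. You would have to rule out a marker placed in column $x_1$, above $y_2$, between the placements of $P$ and $Q$. This follows from the permutation property, but you did not say so.

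You can in fact drop the timing entirely. At termination every point of $[1,r]^2$ below $\path(T)$ is marked or covered, and $Z$ lies strictly below the path because it is southeast of $M$. The unique marker in row $y_2$ is $Q$, which lies east of $Z$, and the unique marker in column $x_1$ is $P$, which lies south of $Z$. Neither ray reaches $Z$, so $Z$ would be an uncovered point below the path, a contradiction.
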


\begin{proof}
    Suppose, for the purpose of obtaining a contradiction, that this is not the case. Then, as shown below, there is a marker on $\path(T)$ with two markers to its southeast, one of which is southwest of the other. 
    $$\begin{tikzpicture}[scale=.5]
        \draw[ultra thick] (0,2) -- (0,4) -- (4,4);
        \foreach \x in {(0,4), (2,1), (5,2)}
            {\draw \x circle (5pt);}
        \foreach \x in {(2,1), (5,2)}
            {\fill \x circle (3pt);}
        \draw[ultra thick,decorate,decoration={coil,aspect=0}] (0,2) -- (-1.5,.5);
        \draw[ultra thick,decorate,decoration={coil,aspect=0}] (4,4) -- (6.5,5.5);
        \draw (-1.5,2.5) node[left] {$\path(T)$};
        \draw[black!60] (-1.5,.5) grid (6.5,5.5);
    \end{tikzpicture}$$
    These three points identify a $132$-pattern in $\permtabmap(T)$, which is a contradiction.
\end{proof}

We are now ready to prove our main result. 

\begin{theorem}\label{thm:webby forests as pattern avoidance}
    The map $\permwebmap$ is a bijection between 
    forest degree-two $\mathfrak{sl}_r$-webs
    and the set of $\{132,4321,3214\}$-avoiding permutations in $S_r$. 
    \end{theorem}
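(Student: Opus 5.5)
Since $\webtabmap$ is a bijection from degree-two webs to $r\times 2$ standard Young tableaux and $\permtabmap$ is a bijection onto $132$-avoiding permutations in $S_r$, the map $\permwebmap$ is already a bijection onto $132$-avoiders. So the theorem reduces to one claim: for a degree-two web $G$ with $T=\webtabmap(G)$, $G$ is a forest if and only if $\permtabmap(T)$ avoids $4321$ and $3214$. By Lemma~\ref{lem:MC}, this is the same as showing that $G$ has at most $3$ white vertices exactly when $\permtabmap(T)$ avoids both patterns. I would prove this in three steps, passing through noncrossing matchings and Dyck paths.

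\emph{Step 1: forests in terms of matchings.} Reversing the construction of $\webtabmap$, a forest web with two white vertices gives a matching $\mu(T)$ in which the boundary splits into two cyclic intervals of $r$ consecutive vertices. All $r$ arcs then form one nested ``bundle'' joining the two intervals. A forest web with three white vertices gives three cyclic intervals, with arcs only between distinct intervals, in three nested bundles whose sizes are the triangle weights. I would argue the converse too: if $\mu(T)$ has no such decomposition, there are at least four white vertices. This could be done either through \cite[Sec.~2.4]{GPPSS25} or by observing that each white vertex collects a maximal cyclic interval of boundary points whose arcs ``go the same way''. Hence $G$ is a forest if and only if the circle can be cut into at most three cyclic intervals with no arc internal to an interval.

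\emph{Step 2: Dyck paths and permutations for forests.} Next I would translate this into shapes of $\path(T)$. The cut positions may wrap around the point between $2r$ and $1$. This is where Lemma~\ref{lem:n1 arc means weakly above y = x} enters: it separates the case where some bundle contains the arc $\{1,2r\}$ (interior of the path weakly above $y=x$) from the case where it does not (the path touches $y=x-1$ and decomposes). In each case the path has very few corners, of the rough form $N^{a}E^{b}N^{c}E^{d}$ possibly followed by one further return. I would then run the marker construction of $\permtabmap$ explicitly on these shapes. The result should be a permutation built from at most three decreasing-then-increasing blocks of a constrained form, and checking directly that these avoid $4321$ and $3214$ should be routine.

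\emph{Step 3: the converse.} This is the main obstacle. Suppose $w=\permtabmap(T)$ avoids $132$, $4321$ and $3214$; I must show $\mu(T)$ admits the three-interval decomposition. I would argue through the markers of $\path(T)$, namely its northwest corners. A $132$-avoider is determined by its left-to-right minima, which are these corner markers. The other markers are placed greedily, and by Lemma~\ref{lem:points are southeast of each other} the non-corner points southeast of any one corner form an increasing chain. Avoiding $4321$ bounds how many corners can form a descending staircase, which in turn bounds how often the path can return toward the diagonal. Avoiding $3214$ forbids a particular configuration of three nested corners followed by a point to their east. I expect the main difficulty to be showing that these two restrictions together force at most three corners in the appropriate sense, modulo the wrap-around handled by Lemma~\ref{lem:n1 arc means weakly above y = x}. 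The first thing I would try is induction on $r$. Removing the first or last bundle of arcs should shorten the path while preserving pattern avoidance, and the base cases are exactly the two $4\times 2$ tableaux of Example~\ref{ex:webs with 4 white vertices}, which correspond to $4321$ and $3214$. Those examples show that any fourth white vertex creates one of these patterns.
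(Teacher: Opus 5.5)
Your reduction to showing that $G$ is a forest iff $\permtabmap(\webtabmap(G))$ avoids $4321$ and $3214$ is correct, and so is your Step~1 characterization. However, neither direction is actually proved.

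Step~3, which you rightly flag as the crux, is only a hoped-for induction. Nothing explains why deleting a bundle of arcs is compatible with $\permtabmap$. Also, the two $4\times 2$ examples show only that the \emph{smallest} non-forest webs give $4321$ and $3214$, not that every non-forest web contains one of these patterns.

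Step~2 defers to a ``routine'' check on path shapes that are misdescribed. Forest paths include non-elevated three-peak paths that return to $y=x-1$ at an arbitrary interior point, and elevated paths $N P' E$ with $P'$ any two-peak path. They are not just $N^aE^bN^cE^d$ plus a return, and the ``decreasing-then-increasing blocks'' description is unverified.

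What is missing is the translation of white vertices into corners of $\path(T)$. A short arc $\{i,i+1\}$ with $i<2r$ is exactly a northwest corner of $\path(T)$ at step $i$. The arc $\{2r,1\}$ is present exactly when the interior of the path lies weakly above $y=x$ (Lemma~\ref{lem:n1 arc means weakly above y = x}). So the number of white vertices is the number of corners, plus one if the path is elevated.

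You note that the corners are the left-to-right minima of $w$. But left-to-right minima always form a decreasing subsequence, so four corners give $4321$ at once. There is no ``staircase'' to bound, and $4321$-avoidance yields at most three corners for free.

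The only real work is the elevated case with exactly three corners $B,C,D$ (bottom to top). Elevation makes the rows below $B$ outnumber the columns strictly between $B$ and $C$. Likewise, the rows up to $C$ outnumber the columns before $D$. This forces a point south of $B$ and east of $C$, and a point south of $C$ and east of $D$. If no point lay south of $B$ and east of $D$, these two points and $C$ would form a $132$. So such a point exists, and together with $D,C,B$ it gives $3214$.

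Conversely, Lemma~\ref{lem:points are southeast of each other} shows that each entry of a decreasing pattern needs its own corner. So a $4321$ needs four corners, and a $3214$ needs at least three. With exactly three corners, a $3214$ forces a point south of $B$ and east of $D$. That is impossible unless the path is elevated, since otherwise $w$ splits into a top block of large values and a bottom block of small values. Elevation gives a fourth white vertex. This corner-counting is what your Steps~2 and~3 would need in order to become a proof.
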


\begin{proof}
    We will show that the image of the set of forest degree-two $\mathfrak{sl}_r$-webs under the bijection $\permwebmap$ is exactly the set of $\{132, 4321, 3214\}$-avoiding permutations in $S_r$. The $132$-avoidance is guaranteed by $\permtabmap$, so we need only consider $4321$- and $3214$-patterns.

    For the first direction, suppose that we have a web that is not a  forest and $T$ its corresponding two-column tableau via the map $\webtabmap$. The forest condition means, by Lemma \ref{lem:MC}, that the web has at least four white vertices, which can arise from two scenarios.

    In the first case, those white vertices correspond to short arcs $\{a,a+1\}$, $\{b,b+1\}$, $\{c,c+1\}$, and $\{d,d+1\}$, where $1 \le a$, $a+1 < b$, $b+1 < c$, and $c+1 < d$ in the noncrossing matching $\mu(T)$. Then
    the web corresponds, via $\webtabmap$, to a tableau and a Dyck path as shown in Figure~\ref{fig:four corners in the path}. 
    \begin{figure}[htbp]
    \begin{minipage}{.6in}\begin{tikzpicture}[scale=.6]
        \draw (-.5,0) rectangle (1.5,10.5);
        \draw (.5,0) -- (.5,10.5);
        \draw (0,10) node {$1$}; \draw (-.5,9.5) -- (.5,9.5);
        \draw (1,.5) node {$n$}; \draw (.5,1) -- (1.5,1);
        \draw (0,8) node {$a$}; \draw (1,8.75) node {$\scriptstyle a+1$};
        \draw (0,6) node {$b$}; \draw (1,6.75) node {$\scriptstyle b+1$};
        \draw (0,4) node {$c$}; \draw (1,4.75) node {$\scriptstyle c+1$};
        \draw (0,2) node {$d$}; \draw (1,2.75) node {$\scriptstyle d+1$};
        \foreach \x in {1,3,5,7,9} {\draw (0,\x)++(0,.2) node {$\vdots$}; \draw (1,\x)++(0,1) node {$\vdots$};}
    \end{tikzpicture}\end{minipage}
    \hspace{.75in}
    \begin{minipage}{3.75in}
    \begin{tikzpicture}[scale=.5]
        \draw[black!60] (1,0) grid (14,13);
        \foreach \x in {(3,3), (6,5), (8,9), (11,11)} 
            {\draw[ultra thick] \x --++(0,1) --++(1,0);
            \draw[thick] \x++(0,1) circle (5pt);}
        \draw (2.9,3.5) node[left] {step $a$};
        \draw (5.8,5.5) node[left] {step $b$};
        \draw (7.8,9.5) node[left] {step $c$};
        \draw (10.9,11.5) node[left] {step $d$};
        \draw[ultra thick,decorate,decoration={coil,aspect=0}] (1,1) -- (3,3);
        \draw[ultra thick,decorate,decoration={coil,aspect=0}] (4,4) -- (6,5);
        \draw[ultra thick,decorate,decoration={coil,aspect=0}] (7,6) -- (8,9);
        \draw[ultra thick,decorate,decoration={coil,aspect=0}] (9,10) -- (11,11);
        \draw[ultra thick,decorate,decoration={coil,aspect=0}] (12,12) -- (13,13);
        \draw[ultra thick] (1,0) -- (1,1);
        \draw[ultra thick] (13,13) -- (14,13);
        \draw[thick,dotted] (1,0) -- (14,13);
        \draw (15,6.5) node[right] {$y = x-1$};
        \draw[->] (15,6.5) -- (9.75,8.5);
    \end{tikzpicture}
    \end{minipage}
    \caption{A tableau and path corresponding to a web having four short arcs of the form $\{i,i+1\}$.}\label{fig:four corners in the path}
    \end{figure}
    Therefore the corresponding permutation, via $\permtabmap$, has a $4321$-pattern.

    If this is not the case, then there are exactly four short arcs in $\mu(T)$, and they have the form 
    $\{2r,1\}$, $\{b,b+1\}$, $\{c,c+1\}$, and $\{d,d+1\}$, for $1 < b$, $b+1 < c$, $c+1 < d$, and $d+1 < 2r$. 
    Then the corresponding path $\path(T)$ has the form shown in Figure~\ref{fig:three corners in the path and above the line}, 
    \begin{figure}[htbp]
    \begin{tikzpicture}[scale=.5]
        \fill[black!20] (6.5,.5) rectangle (11.5,4.5);
        \draw[black!60] (1,0) grid (12,11);
        \foreach \x in {(1,3), (4,7), (7,10)} 
            {\draw[ultra thick] \x --++(0,1) --++(1,0);
            \draw[thick] \x++(0,1) circle (5pt);}
        \draw (1,3.5) node[left] {step $b$};
        \draw (3.8,7.5) node[left] {step $c$};
        \draw (6.9,10.5) node[left] {step $d$};
        \draw[ultra thick] (1,0) -- (1,4) -- (4,4) -- (4,8) -- (7,8) -- (7,11) -- (12,11);
        \draw[ultra thick] (10,11) -- (12,11);
        \draw[thick,dotted] (1,1) -- (11,11);
        \draw[red] (5,3) node {$\times$};
        \draw[red] (10,6) node {$\times$};
        \draw (13,6.5) node[right] {$y = x$};
        \draw[->] (13,6.5) -- (8.75,8.5);
    \end{tikzpicture}
    \caption{A tableau and path corresponding to a web having four short arcs, one of which is $\{2r,1\}$.}\label{fig:three corners in the path and above the line}
    \end{figure}
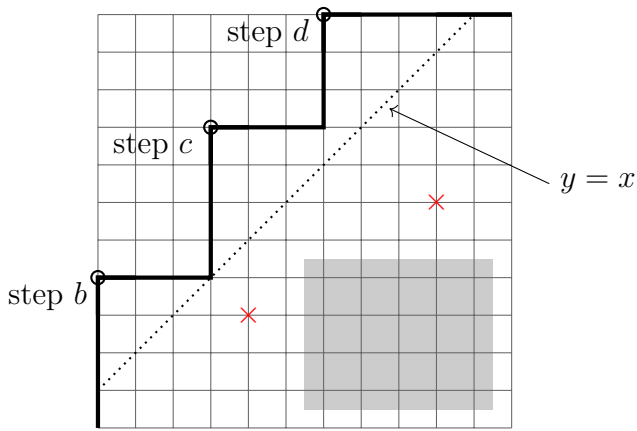
    where the interior of $\path(T)$ lives weakly above $y = x$, by Lemma~\ref{lem:n1 arc means weakly above y = x}. 
    
    If a marker (i.e., a point of $\permtabmap(T)$) lies in the shaded region of Figure~\ref{fig:three corners in the path and above the line}, then $\permtabmap(T)$ has a $3214$-pattern. Suppose, for the purpose of obtaining a contradiction, that there is no such marker. 
    Because $b \ge 2$, there is at least one point of $\permtabmap(T)$ below step $b$. Thus this must appear to the left of the shaded region, and, because the interior of $\path(T)$ lies weakly above $y = x$, there is such a point to the right of step $c$. Let the leftmost red $\textcolor{red}{\times}$ indicate this point in the diagram.
    This, then, implies that there is a point to the right of step $d$ that is below step $c$ (necessarily above the shaded region), indicated by the rightmost red $\textcolor{red}{\times}$ in the diagram. 
    Those two points and the marker after step $c$ would form a $132$-pattern in $\permtabmap(T)$, which is impossible. Thus a point of $\permtabmap(T)$ lies in the shaded region of the figure, and hence $\permtabmap(T)$ has a $3214$-pattern.

    To show the converse statement, suppose that $w$ is a $132$-avoiding permutation that contains at least one of the patterns $\{4321,3214\}$. Let $T = \permtabmap^{-1}(w)$ be the corresponding two-column tableau. 
    In order to avoid $132$ and yet have one of these patterns, it follows from Lemma~\ref{lem:points are southeast of each other} that there must be at least three markers along $\path(T)$. If there are four markers along $\path(T)$ then the corresponding web $\permwebmap^{-1}(w)$ has at least four white vertices and hence is not a forest \cite{MC}. 
    Consider, now, that there are only three such markers, as shown in Figure~\ref{fig:converse argument}. By Lemma~\ref{lem:points are southeast of each other}, there are no $4321$-patterns in $w$, so $w$ must have a $3214$-pattern. This and Lemma~\ref{lem:points are southeast of each other} force a point of the permutation to lie east of point $D$ and south of point $B$, as indicated by $E$ in Figure~\ref{fig:converse argument}. 
    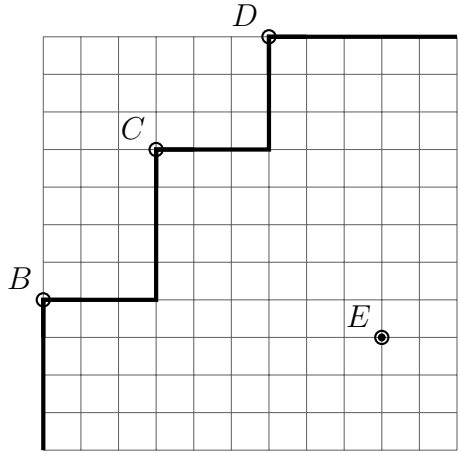
\begin{figure}[htbp]
    \begin{tikzpicture}[scale=.5]
        \draw[black!60] (1,0) grid (12,11);
        \foreach \x in {(1,3), (4,7), (7,10)} 
            {\draw[ultra thick] \x --++(0,1) --++(1,0);
            \draw[thick] \x++(0,1) circle (5pt);}
        \draw (1,4) node[above left] {$B$};
        \draw (4,8) node[above left] {$C$};
        \draw (7,11) node[above left] {$D$};
        \draw[ultra thick] (1,0) -- (1,4) -- (4,4) -- (4,8) -- (7,8) -- (7,11) -- (12,11);
        \draw[thick] (10,3) circle (5pt);
        \fill (10,3) circle (3pt);
        \draw (10,3) node[above left] {$E$};
    \end{tikzpicture}
    \caption{A path with three northwest corners whose corresponding permutation has a $3214$-pattern.}\label{fig:converse argument}
    \end{figure}
    The existence of the point $E$ forces the interior of $\path(T)$ to lie weakly above the line $y = x$, because otherwise the eastward and southward rays from the points of $\permtabmap(T)$ would not cover the region below $\path(T)$. By Lemma~\ref{lem:n1 arc means weakly above y = x}, this means that the web has an arc between $2r$ and $1$, which indicates a fourth white vertex in the web, meaning that we had not started with a  forest web.
\end{proof}

This result appears as entry \cite[P0072]{dppa}.

\section{Enumeration}\label{sec:enumeration}

From Theorem~\ref{thm:webby forests as pattern avoidance}, we can recover Cummings's enumeration result.

\begin{corollary}[{cf.~\cite{MC}}]\label{cor:enumeration}
    There are $r + 2\binom{r}{3}$ forest degree-two $\mathfrak{sl}_r$-webs. 
\end{corollary}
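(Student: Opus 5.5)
By Theorem~\ref{thm:webby forests as pattern avoidance}, it suffices to count $\{132,4321,3214\}$-avoiding permutations in $S_r$. I would not count these directly, though. The proof of the theorem gives a more convenient description on the tableau side. By Lemma~\ref{lem:MC}, a forest web has exactly $2$ or $3$ white vertices. These correspond to the short arcs $\{i,i+1\}$ of $\mu(T)$, read cyclically so that $\{2r,1\}$ also counts. So I would count noncrossing perfect matchings on $2r$ points arranged on a circle that have at most three cyclically short arcs. Equivalently, I would count forest webs directly, split into the two types of Figure~\ref{fig:forest_webs}.

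\emph{Two white vertices.} The web consists of two white vertices, each adjacent to $r$ consecutive boundary vertices. It is therefore determined by a cut of the $2r$ cyclically arranged boundary points into two arcs of length $r$. There are $2r/2 = r$ such cuts. This gives the term $r$.

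\emph{Three white vertices and one black vertex.} Each white vertex is adjacent to a nonempty consecutive block of boundary vertices. The three blocks partition the cycle $[2r]$ into three cyclic intervals of sizes $s_1,s_2,s_3$, with $s_1+s_2+s_3=2r$. The hourglass joining the $i$th white vertex to the black vertex has multiplicity $m_i=r-s_i$. Each $m_i$ must be positive, and $m_1+m_2+m_3=r$ at the black vertex. This forces $s_i<r$, and the condition $\sum s_i = 2r$ is then automatic. So these webs correspond to ways of choosing three cut points on a $2r$-cycle such that every resulting interval has size between $1$ and $r-1$.

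To count them, I would choose an ordered composition $(m_1,m_2,m_3)$ of $r$ into positive parts, together with a starting position in $[2r]$ for the first block. I would then divide by $3$ to account for cyclic relabeling of the blocks. Since $m_1+m_2+m_3=r$ with $m_i\ge 1$, there are $\binom{r-1}{2}$ compositions, giving $2r\binom{r-1}{2}/3$. I would then use the identity
\[
\frac{2r}{3}\binom{r-1}{2}=\frac{2r(r-1)(r-2)}{6}=2\binom{r}{3},
\]
which yields the term $2\binom{r}{3}$. Adding the two cases gives $r+2\binom{r}{3}$.

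The main obstacle is verifying that every such configuration really is a contracted, fully reduced web, and that distinct configurations give distinct webs. For injectivity I would argue through $\webtabmap$: the matching $\mu(T)$ recovers the blocks, since the short arcs are determined by the cuts. For validity, the fact that these graphs are exactly the forest webs is the content of \cite{MC}, and I would take it as given. A purely permutation-side check is also available. One would count $\{132,4321,3214\}$-avoiders using Lemma~\ref{lem:points are southeast of each other}, splitting by whether $\path(T)$ has $1$, $2$, or $3$ northwest corners. I would run that count as a sanity check for small $r$, for example $r=3$ gives $5$ and $r=4$ gives $12$, which matches $14-2$ from Example~\ref{ex:webs with 4 white vertices}.
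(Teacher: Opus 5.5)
Your count is correct, but it is a genuinely different proof from the paper's. Despite your opening sentence, you never actually use Theorem~\ref{thm:webby forests as pattern avoidance}. Instead you enumerate forest webs directly. The two-white-vertex webs (diametral cuts of the $2r$-cycle) give $r$. The three-white-vertex webs give $\frac{2r}{3}\binom{r-1}{2}=2\binom{r}{3}$. Your division by $3$ is exact: the three cut points are distinct, so each configuration has exactly three labelled preimages, even when $s_1=s_2=s_3$.

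The paper instead stays on the permutation side. It splits $w$ at $m=w^{-1}(r)$ and uses $132$-avoidance to force every entry before $r$ to exceed every entry after it. It then observes that both flanking pieces must avoid $\{132,321\}$ (there are $\binom{k}{2}+1$ such permutations in $S_k$), and that at most one of them may have a descent. It finishes with inclusion--exclusion and the hockey stick identity.

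Your route explains the two summands structurally and needs no pattern-avoidance enumeration. It does carry the burden you flagged, and it is worth locating precisely. Injectivity is trivial, since the blocks are read off the graph. Lemmas~\ref{lem:CRG2} and~\ref{lem:MC} give only necessary conditions, so on their own they yield just the upper bound $r+2\binom{r}{3}$. The lower bound needs every admissible cut configuration to be an honest contracted, fully reduced web, and the paper gives you no way to check that directly.

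You can get this inside the paper's framework rather than importing it from Cummings:
\begin{itemize}
\item $\webtabmap$ is a bijection, and white vertices correspond to cyclically short arcs of $\mu(T)$, as is used in the proof of Theorem~\ref{thm:webby forests as pattern avoidance}.
\item Hence forest webs correspond to noncrossing matchings on $2r$ points with exactly two or three cyclically short arcs. Equivalently, the tree of complementary regions has two or three leaves.
\item Such a matching is determined by the positions of its short arcs, subject to the same constraint $s_i\le r-1$. So your cut-point count applies verbatim to these matchings.
\end{itemize}

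The paper's route sidesteps this issue entirely, because existence is already packaged into the bijection $\permwebmap$. It also shows that Cummings's enumeration follows from the new bijection alone.
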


\begin{proof}
    By Theorem~\ref{thm:webby forests as pattern avoidance}, it suffices to enumerate $\{132,4321,3214\}$-avoiding permutations in $S_r$. Let $w$ be such a permutation, and set $m := w^{-1}(r)$. 
    
    To avoid $132$, every element of $\{w(1),\ldots,w(m-1)\}$ must be larger than every element of $\{w(m+1),\ldots,w(r)\}$. Let $u \in S_{m-1}$ be the permutation that is order isomorphic to $w(1)\cdots w(m-1)$, and let $v \in S_{r-m}$ be order isomorphic to $w(m+1)\cdots w(r)$. Both $u$ and $v$ must avoid $132$. They must also avoid $321$: $u$ to avoid $3214$ in $w$, and $v$ to avoid $4321$ in $w$. Finally, at most one of $\{u,v\}$ can have a descent, to avoid $4321$ in $w$. This characterizes $w$ completely, and we can enumerate such permutations using the following scheme.
    $$\begin{minipage}{1.35in}\begin{tikzpicture}[scale=.3]
        \fill[black!20] (1,8) rectangle (8,12);
        \fill[black!20] (8,1) rectangle (12,8);
        \draw (1,1) rectangle (12,12);
        \draw (1,8) -- (12,8);
        \draw (8,1) -- (8,12);
        \fill (12,8) circle (7pt);
        \foreach \x in {3,...,6}
            {\fill (\x,9-\x) circle (5pt);]}
        \draw (10,10.5) node {{\tiny $132$- and}};
        \draw (10,9.5) node {{\tiny $321$-av.}};
    \end{tikzpicture}\end{minipage} 
    \ + \  
    \begin{minipage}{1.35in}\begin{tikzpicture}[scale=.3]
        \fill[black!20] (1,8) rectangle (8,12);
        \fill[black!20] (8,1) rectangle (12,8);
        \draw (1,1) rectangle (12,12);
        \draw (1,8) -- (12,8);
        \draw (8,1) -- (8,12);
        \fill (12,8) circle (7pt);
        \foreach \x in {9,10,11}
            {\fill (\x,20-\x) circle (5pt);]}
        \draw (4.5,5) node {{\tiny $132$- and}};
        \draw (4.5,4) node {{\tiny $321$-av.}};
    \end{tikzpicture}\end{minipage}
    \ - \ 
    \begin{minipage}{1.35in}\begin{tikzpicture}[scale=.3]
        \fill[black!20] (1,8) rectangle (8,12);
        \fill[black!20] (8,1) rectangle (12,8);
        \draw (1,1) rectangle (12,12);
        \draw (1,8) -- (12,8);
        \draw (8,1) -- (8,12);
        \fill (12,8) circle (7pt);
        \foreach \x in {3,...,6}
            {\fill (\x,9-\x) circle (5pt);]}
        \foreach \x in {9,10,11}
            {\fill (\x,20-\x) circle (5pt);]}
    \end{tikzpicture}\end{minipage}$$
There are $\binom{k}{2} + 1$ permutations in $S_k$ that avoid both $321$ and $132$. Therefore the number of permutations in $S_r$ that avoid $\{132,4321,3214\}$ is
$$\sum_{m=1}^r \left( \binom{m-1}{2} + 1 + \binom{r-m}{2} + 1 - 1\right)
= \binom{r}{3} + r + \binom{r}{3} = r + 2\binom{r}{3},$$
by the hockey stick identity.\footnote{The result is known by many names \cite{hockey_stick}. We use this one in honor of winter in Canada, the home of Mike Cummings whose paper~\cite{MC} inspired this note, and North Dakota, which was the setting of a lovely and productive research visit for the authors, including a frigid hike, great karaoke, and the work behind this note.}
\end{proof}

\section*{Acknowledgments}
The authors thank Oliver Pechenik for helpful comments and Mike Cummings for posing such a lovely problem relating webs and permutations. They also thank the developers of SageMath~\cite{sage}, 
which was useful in this research.

\end{document}